\def\p{{\rm p}}
\def\d{{\partial }}
\def\V{\mathcal{V}}
\def\R{\mathcal{R}}
\def\L{\mathcal{L}}
\def\N{\mathcal{N}}
\def\vstrut{\phantom{\biggm|}}
\def\diag#1{{{\rm diag}(#1)}}
\def\avg#1.{\langle #1 \rangle}
     \def\section{\@startsection{section}{1}%
     \z@{.7\linespacing\@plus\linespacing}{.5\linespacing}%
     {\bfseries
     \centering
     }}
     \def\@secnumfont{\bfseries}
\newtheorem{theorem}{Theorem}[section]
\newtheorem{lemma}[theorem]{Lemma}
\newtheorem{proposition}[theorem]{Proposition}
\newtheorem{corollary}[theorem]{Corollary}
\theoremstyle{definition}
\newtheorem{definition}[theorem]{Definition}
\newtheorem{example}[theorem]{Example}
\theoremstyle{remark}
\newtheorem{remark}[theorem]{Remark}
\numberwithin{equation}{section}
\begin{document}

\title{Krawtchouk-Griffiths Systems II: As Bernoulli Systems}
\author{Philip Feinsilver}
\address{Department of Mathematics\\
Southern Illinois University, Carbondale, Illinois 62901, U.S.A.}
\email{phfeins@siu.edu}
\subjclass[2010] {Primary 81S05, 62H99; Secondary 20H20, 35R99}

\keywords{orthogonal polynomials, multivariate polynomials, Krawtchouk polynomials, Bernoulli systems, Fock space,
discrete quantum systems, Kravchuk matrices, Riccati equations, Berezin transform, multinomial distribution}

 \begin{abstract}
We call Krawtchouk-Griffiths systems, KG-systems, systems of multivariate polynomials orthogonal with respect
to corresponding multinomial distributions. The original Krawtchouk polynomials are orthogonal with respect to a binomial distribution.
Here we present a Fock space construction with raising and lowering operators. The operators of ``multiplication by X" are found in terms
of boson operators and corresponding recurrence relations presented. The Riccati partial differential equations for the differentiation operators,
Berezin transform and associated partial differential equations are found. These features provide the specifications for a Bernoulli system as a
quantization formulation of multivariate Krawtchouk polynomials.
\end{abstract}

\maketitle

\begin{section}{Introduction}
The original paper of Krawtchouk \cite{Kra} presents polynomials orthogonal with respect to a general
binomial distribution and discusses the connection with Hermite polynomials. 
Krawtchouk polynomials are part of the legacy of Mikhail Kravchuk. 
A symposium in honor of his work and memory was held in Kiev and an accompanying volume was produced
that is most highly recommended, Virchenko \cite{V}.\\

Krawtchouk polynomials appear in diverse areas of mathematics and science. 
Applications range from coding theory, \cite{Sl}, to image processing, \cite{Y}.
Multivariable extensions are of interest and the field is very active. We
cite works which have some connection to the approach in this paper.\\

The idea of extending to the multinomial distribution appears in the foundational work of Griffiths \cite{DG,G1,G2}.
Connections with Lie theory have been studied more recently, \cite{GE1,GE2,I,IT,K,Sh} as well as 
from the point of view of harmonic analysis \cite{X1,X2}.\\

Bernoulli systems in one variable are explained in \cite{FS1}, with higher-dimensional Bernoulli systems appearing in \cite{FS2},
where the basic methods of this work appear initially. As a good resource, the Berezin approach was applied to the Schr\"odinger algebra
in \cite{FKS}.\\

An analysis of the connections between orthogonal polynomials and probability distributions via properties of 
their generating function are accomplished in \cite{AKK,KK}.\\

We summarize the contents of this work. Section 2 reviews the binomial case and introduces the matrix approach.
This is followed up with a review of the basics of symmetric representations including the homomorphism and transpose
properties. In \S4, the matrix construction of Krawtchouk polynomial systems is presented. \S\S1-4 are a review
of the basic material in KG-Systems I, \cite{F}.\\

Appell systems and Bernoulli systems are described next. Appell systems essentially turn out to have a generating function in the form
of the exponential of raising operators acting on a vacuum state. Bernoulli systems are Appell systems with orthogonal polynomials
as basis states. The Bernoulli systems provide models for Fock space constructions and for quantization with variables expressed
in operator form. \S6 discusses the form of the observables and lays out the associated constructions of interest, 
such as coherent states and the Leibniz function.
After a review of the multinomial distribution in \S7, in \S8, we identify Krawtchouk polynomials in the context of Bernoulli systems.
Especially, we find the canonical velocity (differentiation) operators and the form of the observables. Using coherent state techniques,
the lowering operators are found via the Leibniz function. This rounds out a description of the Bernoulli system and associated quantities.
To conclude, we find the $X_j$ variables in selfadjoint form and present associated recurrence formulas for the basis Krawtchouk polynomials.

\begin{subsection}{Basic notations and conventions}

In this paper we will be working over $\mathbb{R}$.

\begin{enumerate}

\item We consider polynomials in $d+1$ commuting
variables.\\

\item Multi-index notations for powers. With
$n=(n_0,\ldots,n_d)$, $x=(x_0,\ldots,x_d)$:
$$ x^n = x_0^{n_0}\cdots x_d^{n_d} $$
and the total degree $|n|=n_0+\cdots+n_d$.
Typically $m$ and $n$ will denote multi-indices, with
$i,j,k,\ell$ for single indices. Running indices
may be used as either type, determined from the context.\\

\item We use the following \textsl{summation convention} \\
 repeated Greek indices, e.g., $\lambda$ or $\mu$, are summed from $0$ to $d$. \\

Latin indices $i$, $j$, $k$, run from $1$ to $d$ unless explicitly indicated otherwise and are summed only when explicitly indicated,
preferring $\ell$ for a single index running from 0 to $d$.\\

We will use the notation for standard basis $e_\ell$ as well for shifting multi-indices, e.g. $n\pm e_\ell$ shifts $n_\ell\pm 1$ accordingly.\\

\item For simplicity, we will always denote identity matrices of
the appropriate dimension by $I$. \\

The transpose of a matrix $A$ is denoted $A^\top$. \\

We will use the notation $\mathcal{O}$ to denote a real orthogonal matrix.\\

\item Given $N\ge 0$, $B$ is defined as
the multi-indexed matrix having as its only non-zero entries 
$$B_{mm}= \binom{N}{m}=\frac{N!}{m_0!\ldots m_d!}$$
the multinomial coefficients of order $N$. \\

\item For a tuple of numbers, $\diag{\ldots}$ is the diagonal matrix with the tuple providing the entries forming the main diagonal.\\

\item Expectation with respect to a given underlying distribution is denoted $\avg \cdot .$.
\end{enumerate}
\end{subsection}
\end{section}
\section{Krawtchouk polynomials in one variable and the binomial distribution}
{ Krawtchouk polynomials} may be defined via the generating function
\begin{align*}
(1+pv)^{N-x}(1-qv)^x&=\sum_{0\le k\le N} v^k\,K_k(x,N)\\
\end{align*}
The polynomials $K_k(x,N)$ are orthogonal with respect to the binomial distribution with parameters $N,p$. 
The associated probabilities have the form 
$$\{\, \binom{N}{0}q^Np^0,\ldots,\binom{N}{x}q^{N-x}p^x,\ldots ,\binom{N}{N}q^0p^N\,\} \ .$$
Let's verify this. Setting $G(v)=(1+pv)^{N-x}(1-qv)^x$, we have
\begin{align*}
\avg G(v)G(w).&=\sum_x \binom{N}{x}q^{N-x}p^x(1+pv)^{N-x}(1-qv)^x(1+pw)^{N-x}(1-qw)^x\\
&=(q+qp(v+w)+qp^2vw+p-pq(v+w)+pq^2vw)^N\\
&=(1+pqvw)^N\\
&=\sum_{k=0}^N \binom{N}{k}(pq)^k (vw)^k
\end{align*}
which shows orthogonality and identifies the squared norms
$$\avg K_iK_j.=\delta_{ij}\binom{N}{i}(pq)^i \ .$$
with $0\le i,j\le N$.

\subsection{Matrix formulation}{
Setting $\displaystyle \begin{pmatrix} y_0\\ y_1\end{pmatrix}=\begin{pmatrix} 1&p\\ 1&-q\\ \end{pmatrix}
\begin{pmatrix} v_0\\ v_1\end{pmatrix}\vstrut$ we have 
$$y_0^{N-x} y_1^x=\sum_k v_0^{N-k}v_1^k \Phi_{kx} \ .$$
We call $\Phi$ a (the) \textsl{Kravchuk matrix }.
The rows of the matrix $\Phi$ consist of the values taken on by the corresponding polynomials at the points $x$.
The expression of orthogonality takes the form \\
$$ \Phi B{\rm P} \Phi^\top=BD$$ \\
where $B$ is the diagonal matrix with entries the binomial coefficients $\binom{N}{k}$, the matrix $\rm P$ is diagonal with entries 
$q^{N-k}p^k$ and $D$ is the diagonal matrix with $D_{ii}=(pq)^i$, $0 \le i \le N$. 
}

\section{Symmetric tensor powers}
Given a $(d+1)\times (d+1)$ matrix $A$, the action on the symmetric tensor algebra of the underlying vector space defines 
its ``second quantization" or \textsl{symmetric representation.}  \\

Introduce commuting variables $v_0,\ldots,v_d$. Map
$$y_i=\sum_{j=0}^d A_{ij}v_j$$
The { induced matrix}, $\bar A$, at level (homogeneous degree) $N=n_0+\cdots+n_d$ has entries $\bar A_{mn}$ determined by the expansion
$$ y^m=y_0^{m_0}\cdots y_d^{m_d}=\sum_{n} \bar A_{mn}v^n \ .$$

\begin{remark}
Monomials are ordered according to dictionary ordering with 0 ranking first, followed by $1,2,\ldots,d$. Thus the first column
of $\bar A$ gives the coefficients of $v_0^N$, etc.
\end{remark}

 The map $A\to \bar A$ is at each level a multiplicative homomorphism,
$$\overline{A_1A_2}=\bar A_1\,\bar A_2$$
thus implementing, for each $N\ge 0$,  a representation of the multiplicative semigroup of 
$(1+d)\times(1+d)$ matrices into 
$\vstrut \binom{N+d}{N}\times \binom{N+d}{N}$ matrices
as well as a representation of the group ${\rm GL}(d+1)$ into ${\rm GL}(\binom{N+d}{N})$.

\subsection{Transpose Lemma}
An important lemma is the relation between the induced matrix of $A$ with that of its transpose, $A^\top$. 
\begin{lemma}{\bf  Transpose Lemma.}\\
The induced matrices at each level satisfy
$$ \overline{A^\top}=B^{-1}{\bar A}^\top B \ .$$ 
\end{lemma}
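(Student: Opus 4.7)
The plan is to prove the identity $\binom{N}{m}\bar A_{mn}=\binom{N}{n}\overline{A^\top}_{nm}$ on entries and then repackage it as the matrix statement. The device is a standard generating-function pairing with a set of dummy variables: introduce $w_0,\ldots,w_d$ in addition to the $v_i$, and evaluate the scalar
$$
S=\Bigl(\sum_{i=0}^{d} w_i\, y_i\Bigr)^{N}
$$
in two ways. First, multinomially in the $y$'s, which after using $y^m=\sum_n\bar A_{mn}v^n$ gives
$$
S=\sum_{|m|=N}\binom{N}{m}w^m y^m
=\sum_{m,n}\binom{N}{m}\bar A_{mn}\,w^m v^n .
$$
Second, substitute $y_i=\sum_j A_{ij}v_j$ first, which turns $\sum_i w_iy_i$ into $\sum_j v_j\,(A^\top w)_j$. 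Setting $z_j=(A^\top w)_j$ and expanding multinomially in the $v$'s gives
$$
S=\sum_{|n|=N}\binom{N}{n}v^n z^n
=\sum_{n,m}\binom{N}{n}\,\overline{A^\top}_{nm}\,v^n w^m,
$$
where in the last step I use the definition of the induced matrix applied to $A^\top$ with $w$ playing the role of $v$.

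Next I equate coefficients of $w^m v^n$ in the two expressions for $S$, which yields the entrywise identity
$$
\binom{N}{m}\bar A_{mn}=\binom{N}{n}\,\overline{A^\top}_{nm}.
$$
Since $B$ is the diagonal matrix with $B_{mm}=\binom{N}{m}$, the left side is $(B\bar A)_{mn}$ and the right side is $(B\,\overline{A^\top})_{nm}=(\overline{A^\top}{}^\top B)_{mn}$ (using $B=B^\top$). Hence $B\bar A=\overline{A^\top}{}^\top B$, and transposing (and multiplying by $B^{-1}$ on the left) gives exactly $\overline{A^\top}=B^{-1}\bar A^\top B$.

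The main (very mild) obstacle is purely bookkeeping: keeping the two index sets $m,n$ consistently attached to the $w$- and $v$-expansions so that when the coefficient comparison is done, the transpose on the right-hand side of the lemma comes out on the correct factor. Everything else is either the multinomial theorem or the definition of the induced matrix, so once the dummy pairing is in place, the rest of the argument is immediate.
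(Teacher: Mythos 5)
Your proof is correct. Note that the paper itself does not prove this lemma --- it defers to reference \cite{F} --- and your argument via the bilinear pairing $\bigl(\sum_i w_i y_i\bigr)^N=\bigl(\sum_j (A^\top w)_j v_j\bigr)^N$, expanded multinomially in both variable sets, is the standard proof given there; the coefficient comparison, the identity $\binom{N}{m}\bar A_{mn}=\binom{N}{n}\overline{A^\top}_{nm}$, and the repackaging using $B=B^\top$ are all carried out correctly.
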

\begin{remark} 
Proofs of the homomorphism property and Transpose Lemma are presented in \cite{F}.
\end{remark}

\begin{remark}{\sl Diagonal matrices. Multinomial distribution.}\\

Note that the $N^{\rm th}$ symmetric power of a diagonal matrix, $D$, is itself diagonal with homogeneous
monomials of the entries of the original matrix along its diagonal. In particular, the trace will be the $N^{\rm th}$ homogeneous	
symmetric function in the diagonal entries of $D$.

\begin{example}
For $V=\left(\begin{matrix}v_{0} & 0 & 0\\0 & v_{1} & 0\\0 & 0 & v_{2}\end{matrix}\right)$ we have in degree 2,
$$\bar V=
\left(\begin{matrix}v_{0}^{2} & 0 & 0 & 0 & 0 & 0\\0 & v_{0} v_{1} & 0 & 0 & 0 & 0\\0 & 0 & v_{0} v_{2} & 0 & 0 & 0\\0 & 0 & 0 & v_{1}^{2} & 0 & 0\\0 & 0 & 0 & 0 & v_{1} v_{2} & 0\\0 & 0 & 0 & 0 & 0 & v_{2}^{2}\end{matrix}\right)$$
and so on.
\end{example}	
Note that the special matrix $B$, diagonal with multinomial coefficients as entries along the diagonal
may be obtained as the diagonal of the induced matrix at level $N$ of the  all 1's matrix.\\

We see that if $\p$ is a diagonal matrix with entries $p_\ell>0$, $0\le \ell \le d$, $\displaystyle \sum_\ell p_\ell=1$, then the diagonal matrix
$$B\overline{\p}$$
yields the probabilities for the corresponding { multinomial distribution}.
\end{remark}

\section{Construction of Krawtchouk polynomial systems}
We start with $\mathcal{O}$, a real orthogonal matrix with the extra condition that
all entries in the first column are positive.	
Form the probability matrix thus
$$\p=\begin{pmatrix} \mathcal{O}_{00}^2&& {}\\&\ddots &\\ && \mathcal{O}_{d0}^2\end{pmatrix}=
\begin{pmatrix}  p_0&& {}\\&\ddots &\\ && p_d\end{pmatrix}$$
row and column indices running from $0$ to $d$. \\

Define
$$A=\frac{1}{\sqrt{\p}}\,\mathcal{O}\,\sqrt{D}$$
where $D$ is diagonal with all positive entries on the diagonal, normalized by requiring $D_{00}=1$.
The essential property satisfied by $A$ is
$$ A^\top \p A=D \ .$$
while observing that the entries of the first column, label 0, are all 1's, i.e. $A_{\ell 0}=1$, $0\le \ell \le d$. \\

\begin{definition} \label{def:kcond}\rm    We say that $A$ satisfies the \textsl{$K$-condition} if
there exists a positive diagonal probability matrix $\p$ and a positive diagonal matrix $D$ such that
$$ A^\top \p A=D \ .$$
with $A_{\ell 0}=1$, $0\le \ell\le d$.
\end{definition}

\begin{paragraph}{\bf Notation}
Throughout the remainder of this work, if $A$ satisfies the $K$-condition, we will denote its inverse by $C$. Thus,
\begin{equation}\label{eq:inv}
C=A^{-1}=D^{-1}A^\top \p \ .
\end{equation}
\end{paragraph}

We note two useful properties
\begin{proposition} \label{prop:x}
For $A$ satisfying the $K$-condition we have\\

1. $(p_0,p_1,\ldots,p_d)A=(1,0,\ldots,0)=e_0$. That is, the vector of probabilities $\{p_\ell\}$ times $A$ yields $e_0$. We express this as
$$p_\mu A_{\mu \ell}=\delta_{0\ell} \ .$$

2. The first row of $C$ is $(p_0,p_1,\ldots,p_d)$, i.e., $C_{0\ell}=p_\ell$.
\end{proposition}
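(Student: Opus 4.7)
The two statements are really two sides of the same calculation, and both flow from applying the $K$-condition $A^\top\p A = D$ to the basis vector $e_0$, using the fact that the $0$-th column of $A$ is the all-ones vector. So my plan is to first exploit $A e_0 = (1,1,\ldots,1)^\top$, then unwind the definition of $C$ from \eqref{eq:inv}.

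First I would note that $D_{00}=1$ is automatic (not an independent assumption): from $A^\top \p A = D$ and $A_{\mu 0}=1$,
\begin{equation*}
D_{00} \;=\; (A^\top \p A)_{00} \;=\; \sum_\mu A_{\mu 0}\, p_\mu\, A_{\mu 0} \;=\; \sum_\mu p_\mu \;=\; 1,
\end{equation*}
since $\p$ is a probability matrix. This bookkeeping is worth doing up front because both parts of the proposition invoke $D_{00}^{-1}=1$.

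Next, for part 1, I would apply $A^\top \p A = D$ to $e_0$ from the right. On the one hand $De_0 = D_{00} e_0 = e_0$. On the other hand $A e_0$ is the $0$-th column of $A$, which is the all-ones vector, so $\p A e_0 = (p_0,\ldots,p_d)^\top$, and hence
\begin{equation*}
A^\top (p_0,\ldots,p_d)^\top \;=\; e_0.
\end{equation*}
Transposing this vector identity gives exactly $p_\mu A_{\mu\ell} = \delta_{0\ell}$.

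For part 2, I would simply read off the first row of $C$ from the explicit formula $C = D^{-1}A^\top \p$ in \eqref{eq:inv}. Since $D$ is diagonal with $D_{00}=1$, and $\p$ is diagonal with entries $p_\ell$,
\begin{equation*}
C_{0\ell} \;=\; D_{00}^{-1}\, A_{\ell 0}\, p_\ell \;=\; p_\ell,
\end{equation*}
again using $A_{\ell 0}=1$. (Equivalently, part 2 is the statement that $(p_0,\ldots,p_d)$ is the $0$-th row of $C = A^{-1}$, which is exactly the vector identity from part 1 read the other way around.) There isn't really a main obstacle here: the whole argument is two short matrix multiplications, and the only thing to watch out for is verifying $D_{00}=1$ before dividing by it.
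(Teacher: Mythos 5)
Your proof is correct and takes essentially the same route as the paper's: both parts reduce to the all-ones first column of $A$, the identity $A^\top\p A=D$, and the explicit formula $C=D^{-1}A^\top\p$ (you read the key identity off a column and transpose, where the paper reads it off a row, but that is the same computation). The one small addition is your up-front check that $D_{00}=\sum_\mu p_\mu=1$ is forced by the $K$-condition rather than assumed — a worthwhile observation, since Definition~\ref{def:kcond} does not state it explicitly.
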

\begin{proof} Start with the observation that 
since the first column of $A$ consists of all 1's, the first row of $A^\top$ is all 1's. So the first row of $A^\top\p$ 
is $(p_0,p_1,\ldots,p_d)$.\\

Now, for \#1, the row of probabilities times $A$ is the first row of $A^\top \p A$, thus, the first row of $D$, which is precisely $e_0$.\\
For \#2, using the form $D^{-1}A^\top \p$ for $C$, as in \#1, the top row of $A^\top \p$ is the row of probabilities, and multiplication by
$D^{-1}$ leaves it unchanged, as $D_{00}=1$.
\end{proof}

\subsection{Krawtchouk systems}{
In any degree $N$, the induced matrix $\bar A$ satisfies
$$ \overline{A^\top}\bar \p\bar A=\bar D \ .$$
Using the { Transpose Lemma}
$$B\overline{A^\top}=\bar A^\top B$$ 
where $B$ is the special multinomial diagonal matrix yields
$$\Phi\, B\bar \p\, \Phi^\top=B\bar D$$
the { Krawtchouk matrix} $\Phi$ being thus defined as $\bar A^\top$. \\

The entries of $\Phi$ are the values of the {\bf  multivariate Krawtchouk polynomials} thus determined. \\

$B\bar D$ is the diagonal matrix of squared norms according to the orthogonality of the Krawtchouk polynomial system with respect
to the corresponding multinomial distribution.
}

\begin{example} 
Start with the orthogonal matrix
$\displaystyle \mathcal{O}=\begin{pmatrix}\sqrt{q}&\sqrt{p}\\ \sqrt{p}&-\sqrt{q}\end{pmatrix}\ .$ \\
Factoring out the squares from the first column yields 
$$\p={\begin{pmatrix}q&0\\ 0&p\end{pmatrix}}$$
and we take
$$A=\begin{pmatrix}1&p\\ 1&-q\end{pmatrix}$$
satisfying
$$A^\top\p A={\begin{pmatrix}1&0\\ 0& pq\end{pmatrix}}=D\ .$$ 
Take $N=4$. 
We have the { Kravchuk matrix} 
$\Phi={\bar A}^\top=$
$$\begin{pmatrix} 1&1&1&1&1\\4p&-q+3p&-2q+2p&-3q+p&-4q\\
6{p}^{2}&-3pq+3{p}^{2}&{q}^{2}-4pq+{p}^{2}&3{q}^{2}-3pq&6{q}^{2}\\
4{p}^{3}&-3{p}^{2}q+{p}^{3}&2p{q}^{2}-2{p}^{2}q&-{q}^{3}+3p{q}^{2}&-4{q}^{3}\\
{p}^{4}&-{p}^{3}q&{p}^{2}{q}^{2}&-p{q}^{3}&{q}^{4}\end{pmatrix} \ .$$
$\p$ is promoted to the induced matrix
$$\bar \p=\begin{pmatrix} q^4&0&0&0&0\\0&q^3p&0&0&0\\0&0&q^2p^2&0&0\\
0&0&0&qp^3&0\\0&0&0&0&p^4\end{pmatrix}\ .$$  
and the binomial coefficient matrix  $B=\diag{1,4,6,4,1}$.
\end{example}

\begin{remark} This approach is presented in detail in \cite{F}.
Here we continue with an analytic approach based on operator calculus techniques.
\end{remark}

\section{Appell and Bernoulli systems}{
An \textsl{Appell system} of polynomials is a sequence $\{\phi_n(x)\}_{n\ge0}$ such that
\begin{enumerate}  
\item $\deg \phi_n=n$ 
\item $\d_x\phi_n=n\phi_{n-1} $  where $\displaystyle \d_x=\frac{d}{dx}$. 
\end{enumerate}\hfill\\
Introduce the \textbf{raising operator}
$$\R\phi_n=\phi_{n+1} \ .$$
The pair $\d_x,\R$ satisfy the commutation relations
$$[\d_x,\R]=I$$
of the Heisenberg-Weyl algebra, i.e., { boson commutation relations}.
}
Consider a convolution family of probability measures $p_t$,
$p_t*p_s=p_{t+s}$, for $s,t\ge0$, $p_0$ a point mass at $0$, with corresponding family of
moment generating functions
$$\int_{\mathbb{R}} e^{zx}\,p_t(dx)=e^{tH(z)}$$
where, extending $z$ to complex values, we assume $H(z)$ to be analytic in a neighborhood
of the origin in $\mathbb{C}$, with $H(0)=0$. 
\begin{remark}
Discrete values of $t$ work in general,
while for continuous $t\ge 0$ we require $p_t$ to be 
infinitely divisible.
\end{remark}
We have as generating function for the sequence $\{\phi_n\}$
$$e^{xz-tH(z)}=\sum_{n=0}^\infty \frac{z^n}{n!}\,\phi_n(x,t)$$
including the additional ``time" variable. Note that $\phi_0(x,t)=1$ with
$$\int_{\mathbb{R}}\phi_n(x,t)\,p_t(dx)=\delta_{0n}$$
for $n\ge0$.

\begin{remark}
In the infinitely divisible case, we have the exponential martingale for the corresponding process with independent increments.
\end{remark}
For the multivariate case,  in the exponent, $xz= \sum x_i z_i$. We have $\d_j=\d/\d x_j$, with $\R_i$ raising the index $n_i$ to $n_i+1$,
satisfying
$$[\d_j,\R_i]=\delta_{ij}1$$
noting that the action of $\d_j$ is the same as multiplication by $z_j$ and that the action of $\R_i$ is the same
as $\d/\d z_i$ .

\subsection{Canonical Appell system}
Now observe that if, in one variable, $V(z)$ is analytic in a neighborhood	of $0\in\mathbb{C}$, we can apply the operator
$V(\d)$ to polynomials in $x$ and we have as well
$$V(\d)\,e^{xz}=V(z)\,e^{xz}$$
acting on exponentials, for $z$ in the domain of $V$. We have further the commutation relation
$$[V(\d),x]=V'(\d)$$
differentiating $V$. Next require that $V(0)=0$, $V'(0)\ne 0$ so that $V$ has a locally analytic inverse in a neighborhood
of the origin as well, denoted by $U(v)$, $U(V(z))=z$. This yields a \textsl{canonical pair}
$$\V=V(\d) \qquad \hbox {and }\qquad \R=xW(\d)$$
where $W(z)=1/V'(z)$.\\

If we have an Appell system in several variables as above, we define
canonical raising and \textsl{velocity} operators defined by
$$\V_j\phi_n=n_j\phi_{n-e_j}\qquad \text{and} \qquad \R_i\phi_n=\phi_{n+e_i}$$
satisfying
$$[\V_j,\R_i]=\delta_{ij}\,1$$
where $\V=(\V_1,\ldots,\V_d)$ is given by a function $V$ of $\d=(\d_1,\ldots,\d_d)$, analytic in a neighborhood of $0$ in $\mathbb{C}^d$, with a locally analytic inverse, $U$. The generating function becomes
$$e^{xz-tH(z)}=\sum_n \frac{V(z)^n}{n!}\,\phi_n(x,t)$$
with multi-index notation for the monomials in $V(z)$, and $n!=n_1!\cdots n_d!$ as usual. The generating function
thus takes the equivalent form
$$e^{xU(v)-tH(U(v))}=\sum_n \frac{v^n}{n!}\,\phi_n(x,t)$$
with multiplication by $v_j$ implemented as the operator $V_j(\d)$ and $\d/\d v_j$ yielding the raising operator $\R_j$
after expressing the action in terms of $\d$.

\begin{example} For an example in one variable, take
$$V(z)=-\log(1-z)\,,\quad U(v)=1-e^{-v}\,,\quad W(z)=1-z$$
with no time variable we have
$$\exp\left(x(1-e^{-v})\right)=\sum_{n\ge 0}\frac{\phi_n(x)}{n!}\,v^n$$
with action of the raising operator
$$\R\phi_n=x(1-\partial)\phi_n=\phi_{n+1}$$
and 
$$\V=-\log(1-\d)=\sum_{n\ge1}\frac{\d^n}{n} \ .$$
The coefficients of the polynomials $\phi_n$ are (up to sign) Stirling numbers of the second kind.\\

With $p(dx)=e^{-x}\,dx$ on $x\ge0$, we have
$$\int_0^\infty e^{zx-x}dx=(1-z)^{-1}=e^{H(z)}$$
so $H(z)=-\log(1-z)$, which happens to equal $V(z)$. We get
$$e^{xz-tH(z)}=\exp\left(x(1-e^{-v})\right)e^{-tv}=\sum_{n\ge 0}\frac{\phi_n(x,t)}{n!}\,v^n$$
where now the raising operator is
$$\R=x(1-\d)-t$$
leaving $\V$ unchanged. And we have for $t>0$,
$$\int_0^\infty e^{-x}x^{t-1}\phi_n(x,t)\,dx/\Gamma(t)=\delta_{0n}$$
for $n\ge 0$, the family of measures $p_t$ given by
$$p_t(dx)=e^{-x}x^{t-1}\,dx/\Gamma(t)$$
on $[0,\infty)$.

\end{example}

\subsection{Bernoulli systems} A Bernoulli system is a canonical Appell system such that, for each $t$, the polynomials $\{\phi_n(x,t)\}$ form an 
orthogonal system with respect to the measure $p_t$.
To indicate this, write $J_n$ generically for the corresponding canonical Appell sequence, thus
$$e^{xU(v)-tH(U(v))}=\sum_{n\ge0}\frac{v^n}{n!}\,J_n(x,t) \ .$$

\begin{example}
Probably the most well-known example are Hermite polynomials, $\{H_n\}$, with generating function
$$e^{xz-z^2t/2}=\sum_{n \ge 0}\frac{z^n}{n!}\,H_n(x,t)$$
orthogonal with respect to the Gaussian distribution with mean zero and variance $t$. Thus $H(z)=z^2/2$,
$$\R=x-t\d\qquad\hbox{and}\qquad \V=\d \ .$$

For an example with nontrivial $V$, consider a family of Poisson-Charlier polynomials with generating function $(1+v)^x\,e^{-tv}$. So
$$U(v)=\log(1+v)\,,\quad V(z)=e^z-1\,,\quad W(z)=e^{-z}$$
with $H(z)=e^z-1$, equal to $V(z)$ in this case. Thus
$$\R=xe^{-\d}-t\qquad\hbox{and}\qquad \V=e^\d-1 \ .$$
The polynomials are orthogonal with respect to the Poisson distribution	on the nonnegative integers with mean $t$.
\end{example}
\subsubsection{ Operator formulation} 
We construct a representation space for the boson commutation relations starting with a \textsl{vacuum state}, $\Omega$,
satisfying $\V_j\Omega=0$, $\forall j$. The basis states are built by acting with the raising operators $\R_i$ on the vacuum state,
thus they are of the form $\R^n\Omega$, for multi-indices $n$, $n_i\ge0$.\\

The  operator form of the generating function is the exponential of the raising operators acting on the vacuum state, using
the abbreviated notation $V(z)\R=\sum\limits_iV_i(z)\R_i$:
$$e^{V(z)\R}\Omega=e^{xz-tH(z)}=\sum_{n\ge0} \frac{V(z)^n}{n!}\,J_n(x,t)$$
where the vacuum state $\Omega$ is here $J_0(x,t)$, the constant function equal to 1. \\

 Introducing the inverse function $ U$, the generating function takes the form
$$e^{v\R}\Omega=e^{xU(v)-tH(U(v))}=\sum_{n\ge0} \frac{v^n}{n!}\,J_n(x,t)$$
with the actions of $\{\R_i\}$ and $\{\V_j\}$ as
$$\R_i J_n=J_{n+e_i}\,,\qquad \V_j J_n=n_j J_{n-e_j} \ .$$
\section{Quantization} We want a commuting family of selfadjoint
operators to serve as quantum observables.
Introduce the operators $X_j$, multiplication by the variables $x_j$.
These will provide the desired operators.\\

Rewrite the generating function in the form
$$e^{zX}\Omega=e^{tH(z)}\,e^{V(z)\R}\,\Omega \ .$$
We start by differentiating with respect to $z_j$ yielding the relation
$$X_j=t\,\frac{\d H}{\d z_j}+\sum_i \R_i\,\frac{\d V_i}{\d z_j} \ .$$
These act as operators by converting the $z_j$ to the partial differentiation operators,
$\d_j$.

\subsection{Specification of the system}
Let's consider the various operators and features involved in specifying the Bernoulli system.\\

First, since we have a Hilbert space (in the present context, over $\mathbb{R}$), we want to find
\textsl{lowering} operators $\{\L_1,\ldots,\L_d\}$, where, for each $i$, $\L_i$ is adjoint to $\R_i$.  
We wish to express all operators in terms of the canonical raising and velocity operators $\R_i$, $\V_j$.  \\

Since we are working with noncommuting operators, it is of interest to study the Lie algebra generated by the raising and lowering operators.\\

With the $\L_j$ in hand, we will express $X_j$ in manifestly self-adjoint form. \\

Some related constructions of interest will be considered as well, notably the Berezin transform, based on the inner product
of coherent states generated by the raising operators. This information is summarized in the \textsl{Leibniz function} to be
explained subsequently. These will provide tools to study the relationships among the lowering operators and the raising
and velocity operators. We will find as well the Riccati partial differential equations satisfied by the velocity operators,
a hallmark feature of Bernoulli systems and equations related to the Leibniz function/Berezin transform.\\

We start in the next two sections reviewing properties of multinomial distributions and the details of the Krawtchouk polynomials
providing the basis states for the Bernoulli system.

\section{Multinomial distribution} \label{sec:multi} First we describe the multinomial process we are interested in.
The process is a counting process keeping track of $d$ possible results, with the possibility that none of them occurs.
 Thus, at each time step the process makes one of $d+1$ choices: \\

1. With probability $p_0$, none of the levels $1$ through $d$ increase. \\

2. With probability $p_i$, $1\le i\le d$, level $i$ increases by 1.  \\

The corresponding { moment generating function} for one time step is
\begin{align*}
p_0+\sum_i p_i e^{z_i}&=1+\sum_i p_i(e^{z_i}-1)\\ 
&=p_\mu e^{z_\mu}
\end{align*}
where we set $z_0=0$. The moment generating function for $N$ steps is thus
$$e^{tH(z)}=(p_\mu e^{z_\mu})^N$$  
where we identify
$$t=N\qquad\hbox{and}\qquad H(z)=\log\left(p_\mu e^{z_\mu}\right) \ .$$

\section{Multivariate Krawtchouk polynomials as Bernoulli systems}
We are given a matrix $A$ satisfying the $K$-condition $A^\top\p A=D$.
The Kravchuk matrix $\Phi$ is the transpose of the symmetric power of $A$.
In degree $N$, we replace the index $m$ by the variables
$\{N-\sum x_i,x_1,\ldots,x_d\}$, where the system has $d$ variables, the variable
$x_0$ being determined by homogeneity, equivalently, in terms of the process, after $N$ steps
if you know $x_1,\ldots,x_d$, then $x_0$ is known. Thus,
$$(Av)^x = \sum_n v^n \Phi_{nx}=\sum_n \frac{v^n}{n!}\, K_n(x,N) \ .$$
More explicitly,
\begin{align*}
(A_{0\mu}v_\mu)^{N-\sum x_i}&(A_{1\mu}v_\mu)^{x_1}\cdots (A_{d\mu}v_\mu)^{x_d}\\ &=\sum \frac{v^n}{n!}\,K_n(x,N) \ .
\end{align*}
Recall that the first column of $A$ consists of all 1's, and set $\alpha_0=A_{00}=1$, 
$\alpha_i=A_{0i}$, { $1\le i\le d$}. \\
We get
$$(\alpha_\lambda v_\lambda)^N\,\prod_i \left(\frac{A_{i\mu}v_\mu}{\alpha_\nu v_\nu}\right)^{x_i}=
e^{xU(v)-NH(U(v))}$$
as the generating function for a Bernoulli system.

\subsection{Identification of Bernoulli constituents}
Let's determine the various Bernoulli parameters.\\

As seen in \S\ref{sec:multi}, we have $t=N$ and $ H(z)=\log p_\mu e^{z_\mu}$ .
From the generating function, the coefficient of $N$ shows that
\begin{equation}\label{eq:H}
H(z)=\log p_\mu e^{z_\mu}=\log(1/\alpha_\mu V_\mu(z))
\end{equation}
or
\begin{equation}\label{eq:y}
p_\mu e^{z_\mu}=\frac{1}{\alpha_\mu V_\mu(z)} 
\end{equation}

Looking at the coefficients of the variables $x_i$ in the exponent, we identify $U$, the inverse to $V$, such that
\begin{equation}\label{eq:x}
U_k(v)=\log \frac{A_{k\mu} v_\mu}{\alpha_\nu v_\nu} 
\end{equation}

\subsection{Canonical velocity operators}Now we can solve for the velocity operators $V_k(z)$.
We have the inverse matrix
$$C=A^{-1}=D^{-1}A^\top \p \ .$$ 
Combine equations \eqref{eq:x} and \eqref{eq:y} to get
$$z_k=U_k(V)=\log\left( {p_\lambda e^{z_\lambda} A_{k\mu} V_\mu}\right)$$
Now exponentiate and move the $p$ factor across 
\begin{equation}\label{eq:z}
\frac{e^{z_k}}{p_\mu e^{z_\mu}}=A_{k\mu} V_\mu
\end{equation}
and applying $C$ to both sides we have
\begin{proposition}\label{prop:vel}
For the Krawtchouk Bernoulli system we have the canonical velocity operators
$$V_k(z)=\frac{1}{p_\mu e^{z_\mu}}\,C_{k\lambda}e^{z_\lambda} \ .$$
satisfying the Riccati partial differential equations
$$\frac{\d V_i}{\d z_j}=\bigl(C_{ij}-p_jV_i\bigr)\,A_{j\mu}V_\mu $$ \\
for {$1\le i,j\le d$}.  
\end{proposition}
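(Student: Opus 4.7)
The plan is to derive both assertions directly from equation~\eqref{eq:z}, which was established immediately before the proposition and which reads $e^{z_k}/(p_\mu e^{z_\mu}) = A_{k\mu} V_\mu$. Everything else is quotient-rule bookkeeping on top of this identity and the formula $C = D^{-1}A^\top \p$ from \eqref{eq:inv}.

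For the closed form of $V_k$, I would read \eqref{eq:z} as a vector equation: the column with entries $e^{z_k}/(p_\mu e^{z_\mu})$ equals $AV$. Applying $C = A^{-1}$ from the left inverts this relation component-wise and produces
$$V_k(z) = \frac{1}{p_\mu e^{z_\mu}}\,C_{k\lambda}\,e^{z_\lambda},$$
after one relabeling of the summation index. A single line noting that the Greek indices $\lambda,\mu$ are summed from $0$ to $d$ per the running convention would complete this step.

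For the Riccati system, I would next differentiate the closed form for $V_i$ with respect to $z_j$ using the quotient rule. Since $j$ is a Latin index with $j\ge 1$, the Kronecker delta arising from $\partial(C_{i\lambda}e^{z_\lambda})/\partial z_j$ selects the single term $C_{ij} e^{z_j}$ from the Greek sum over $\lambda$, and similarly $\partial(p_\mu e^{z_\mu})/\partial z_j = p_j e^{z_j}$. Combining the two contributions yields
$$\frac{\partial V_i}{\partial z_j} \;=\; C_{ij}\,\frac{e^{z_j}}{p_\mu e^{z_\mu}} \;-\; V_i\,p_j\,\frac{e^{z_j}}{p_\mu e^{z_\mu}}.$$
The final move is to re-apply \eqref{eq:z} in the form $e^{z_j}/(p_\mu e^{z_\mu}) = A_{j\mu} V_\mu$ to both occurrences of the fraction and factor out $A_{j\mu}V_\mu$, giving the stated $(C_{ij}-p_j V_i)\,A_{j\mu}V_\mu$.

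The only real obstacle is keeping the summation conventions straight, in particular ensuring that the Latin-index derivative in $z_j$ correctly collapses the Greek sums over $\lambda$ and $\mu$ to their single $\lambda=j$ and $\mu=j$ terms. No further ingredients beyond \eqref{eq:inv} and \eqref{eq:z} are required, and no genuine analytic difficulty arises; the Riccati structure is forced automatically because the denominator $p_\mu e^{z_\mu}$ and the numerator $C_{i\lambda}e^{z_\lambda}$ both reconstitute factors of $AV$ after differentiation.
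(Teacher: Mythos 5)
Your proposal is correct and follows essentially the same route as the paper: the closed form is obtained exactly as in the text preceding the proposition (applying $C=A^{-1}$ to both sides of equation~\eqref{eq:z}, with the $k=0$ component supplied by \eqref{eq:y} and $z_0=0$), and the Riccati equations come from the same quotient-rule computation followed by resubstitution of \eqref{eq:z}. No gaps; the index bookkeeping you describe is precisely what the paper's proof carries out.
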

\begin{proof}We need only verify the form of the differential equations. We have
\begin{align*}
\frac{\d V_i}{\d z_j}&=-\frac{p_je^{z_j}}{p_\mu e^{z_\mu}}\,V_i+\frac{C_{ij}e^{z_j}}{p_\mu e^{z_\mu}}\\
&=(-p_jV_i+C_{ij})\frac{e^{z_j}}{p_\mu e^{z_\mu}}
\end{align*}
and the result follows upon substituting the relation from equation \eqref{eq:z}.
\end{proof}

 It is convenient to assign/adjoin  \textsl{ projective coordinates}, $v_0=V_0=1$, and we have previously set $z_0=0$. To verify
consistency, substitute $k=0$ in the formula for $V_k$:
$$V_0(z)=\frac{1}{p_\mu e^{z_\mu}}\,C_{0\lambda}e^{z_\lambda} \ . $$
Now invoke Proposition \ref{prop:x}, \#2, to see that $V_0$ is identically equal to one.

\subsection{Observables}
We can now express the observables $X_j$ in terms of the raising and velocity operators.
Recall the relation
$$X_j=t\,\frac{\d H}{\d z_j}+\sum_i \R_i\,\frac{\d V_i}{\d z_j}$$ 
resulting by differentiating the generating function with respect to $z_j$.
\begin{proposition}\label{prop:obs}
The observables $X_j$ have the form
$$X_j=\left(N\,p_j+\sum_i \R_i(C_{ij}-p_j\V_i)\right)\,A_{j\mu} \V_\mu \ .$$ 
\end{proposition}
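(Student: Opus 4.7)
The proof is essentially a direct substitution, so my plan is to execute each step carefully, paying attention to the summation conventions and the projective-coordinate extension $V_0 = 1$.

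First I would compute the derivative $\partial H/\partial z_j$ from $H(z) = \log(p_\mu e^{z_\mu})$, which immediately gives
$$\frac{\partial H}{\partial z_j} = \frac{p_j e^{z_j}}{p_\mu e^{z_\mu}}.$$
Now the key trick is to invoke equation \eqref{eq:z}, which states $e^{z_j}/(p_\mu e^{z_\mu}) = A_{j\mu} V_\mu$. This rewrites the derivative cleanly as $\partial H/\partial z_j = p_j A_{j\mu} V_\mu$, and hence $t \,\partial H/\partial z_j = N p_j A_{j\mu} V_\mu$.

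Next I would substitute the Riccati equation from Proposition \ref{prop:vel}, $\partial V_i/\partial z_j = (C_{ij} - p_j V_i)\, A_{j\mu} V_\mu$, into the sum $\sum_i \R_i \,\partial V_i/\partial z_j$. The factor $A_{j\mu} V_\mu$ appears identically in both terms of the expansion of $X_j$, so it factors out on the right to yield
$$X_j = \Bigl(N p_j + \sum_i \R_i (C_{ij} - p_j V_i)\Bigr)\, A_{j\mu} V_\mu.$$
Converting to operator form by promoting $V_\mu$ to the velocity operators $\V_\mu$ (with $\V_0 = 1$ as the projective coordinate) gives the claimed formula.

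There is no real obstacle here; the main subtlety is purely notational — one must keep track of the fact that $\mu$ is a Greek index ranging from $0$ to $d$ with $\V_0 = 1$, so the sum $A_{j\mu}\V_\mu$ contains the $\mu=0$ term $A_{j0} = 1$. One should also note that the operator $A_{j\mu}\V_\mu$ is placed on the right of the other factors, consistent with the action of $\partial/\partial z_j$ acting on the right-hand factor of $e^{zX}\Omega = e^{tH(z)} e^{V(z)\R}\Omega$ as differentiated in the derivation preceding the statement. The consistency of factor order, and the fact that in the final operator expression $\V_\mu$ need not commute with $\R_i$, should be checked at the end but causes no issue because $\V_\mu$ sits to the right of all $\R_i$'s by construction.
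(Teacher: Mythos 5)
Your proposal is correct and follows exactly the paper's own argument: compute $\partial H/\partial z_j = p_j e^{z_j}/(p_\mu e^{z_\mu}) = p_j A_{j\mu}\V_\mu$ via equation \eqref{eq:z}, then substitute the Riccati relation of Proposition \ref{prop:vel} and factor out the common $A_{j\mu}\V_\mu$. Your additional remarks on the $\mu=0$ term and operator ordering are sensible elaborations of the same proof.
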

\begin{proof}Observe that
$$\frac{\d H}{\d z_j}=\frac{p_je^{z_j}}{p_\mu e^{z_\mu}}=p_jA_{j\mu} \V_\mu$$
by equation \eqref{eq:z}. Now apply Proposition \ref{prop:vel} to get the result.
\end{proof}

\section{Coherent states. Leibniz function. Lie algebra}
Now we want to find the lowering operators, the operators adjoint to the raising operators. 
$\L_i$ denotes the adjoint of $\R_i$. We employ techniques involving coherent states.\\

The generating function $e^{V\R}\Omega$ is a type of coherent state. The inner product of coherent states
has the form
$$\Upsilon=\avg e^{B\R}\Omega,e^{V\R}\Omega.=\phi(B_1V_1,\ldots,B_dV_d)$$
by orthogonality.  Working with this we can find the lowering operators. \\

 We have 
$$\Upsilon=\avg \Omega,e^{B\L}e^{V\R}\Omega.$$
equal to the vacuum expectation value of the group element $e^{B\L}e^{V\R}$. 
Comparing with the Heisenberg-Weyl group
$$e^{B\d}e^{VX}=e^{VX}e^{BV}e^{B\d}$$
we call $\Upsilon$ the \textsl{Leibniz function} of the system.

\subsection{Finding the lowering operators}
If we know the Leibniz function, we have the differential relations

$$
\frac{\d \Upsilon}{\d V_i}=\avg e^{B\R}\Omega,\R_i e^{V\R}\Omega. \quad\hbox{and}\quad
\frac{\d \Upsilon}{\d B_i}=\avg e^{B\R}\Omega,\L_i e^{V\R}\Omega. \ .
$$
These are effectively the \textsl{Berezin transforms} of $\R_i$ and $\L_i$ respectively.\\

Thus to find the lowering operators, we wish to express the partial derivatives $\displaystyle \frac{\d \Upsilon}{\d B_i}$ in terms of $V_i$ and 
$\displaystyle \frac{\d \Upsilon}{\d V_i}$. With the correspondence
$$\frac{\d \Upsilon}{\d V_i} \longleftrightarrow \R_i$$
we will have found the lowering operators in terms of the canonical raising and velocity operators.

\subsection{Leibniz function for the Krawtchouk system}
In our case, the generating function for the Krawtchouk polynomials is the coherent state we will use:
$$e^{V\R}\Omega=e^{xU(V)-tH(U(V))}$$
Multiplying by $e^{B\R}\Omega$ and averaging, we have
$$\avg e^{xU(V)+xU(B)}.e^{-t(H(U(B))+H(U(V)))}$$
Recalling the moment generating function, using averaging notation,
$$\avg e^{xz}.=e^{tH(z)}$$
we find in the exponent $t$ times
$$H(U(B)+U(V))-H(U(B))-H(U(V))=\psi(BV)=\psi(B_1V_1,\ldots,B_dV_d)$$
thus defining $\psi$, where we use the fact that we have an orthogonal system.

\begin{proposition}\label{prop:leib}
The Leibniz function for the Krawtchouk system is given by
$$\Upsilon=\avg e^{B\R}\Omega,e^{V\R}\Omega.=(B_\mu D_\mu V_\mu)^N$$
where $B_0=V_0=1$ and $D_i=D_{ii}$ are the diagonal entries of $D$.
\end{proposition}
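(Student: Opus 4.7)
The plan is to reduce the statement to an algebraic identity for the function $\psi$ introduced in the paragraph preceding the proposition, namely
\[
\psi(BV) = H(U(B)+U(V)) - H(U(B)) - H(U(V)),
\]
so that $\Upsilon = e^{N\psi(BV)}$. Concretely, I would show $\psi(BV) = \log(B_\mu D_\mu V_\mu)$ and then raise to the $N$-th power. This turns the problem into a finite computation whose whole content is the $K$-condition $A^\top \p A = D$.

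First I would assemble the local ingredients. From equation \eqref{eq:x}, $e^{U_i(V)} = A_{i\mu}V_\mu / \alpha_\nu V_\nu$ for $i \geq 1$, while the projective convention $V_0 = 1$, $z_0 = 0$ forces $U_0(V)=0$; here $\alpha_\mu = A_{0\mu}$. Combining with $H(z) = \log p_\mu e^{z_\mu}$ gives
\[
e^{H(U(V))} \;=\; p_0 + \sum_i p_i\,\frac{A_{i\mu}V_\mu}{\alpha_\nu V_\nu} \;=\; \frac{p_\rho A_{\rho\mu}V_\mu}{\alpha_\nu V_\nu} \;=\; \frac{1}{\alpha_\nu V_\nu},
\]
where the last step uses Proposition \ref{prop:x}\#1. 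So $e^{-H(U(B))}e^{-H(U(V))} = (\alpha_\nu B_\nu)(\alpha_\nu V_\nu)$.

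Next I would tackle the mixed term $e^{H(U(B)+U(V))}$. Since $e^{U_\mu(B)+U_\mu(V)} = (A_{\mu\rho}B_\rho)(A_{\mu\sigma}V_\sigma)/((\alpha B)(\alpha V))$ for all $\mu$ (including $\mu=0$, which contributes $1$ on top and $(\alpha B)(\alpha V)$ on the bottom), summing against $p_\mu$ produces the bilinear form
\[
e^{H(U(B)+U(V))} \;=\; \frac{p_\mu A_{\mu\rho} A_{\mu\sigma}\,B_\rho V_\sigma}{(\alpha_\nu B_\nu)(\alpha_\nu V_\nu)} \;=\; \frac{(A^\top \p A)_{\rho\sigma}\,B_\rho V_\sigma}{(\alpha_\nu B_\nu)(\alpha_\nu V_\nu)}.
\]
The $K$-condition collapses the numerator to $D_{\rho\sigma}B_\rho V_\sigma = D_\mu B_\mu V_\mu$ because $D$ is diagonal. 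Multiplying by the earlier expression $(\alpha B)(\alpha V)$ yields $e^{\psi(BV)} = D_\mu B_\mu V_\mu$, and therefore $\Upsilon = (B_\mu D_\mu V_\mu)^N$. Consistency with the projective convention $B_0 = V_0 = 1$ and the normalization $D_{00}=1$ is automatic, since the $\mu=0$ summand contributes $1$.

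I do not anticipate a real obstacle: the only conceptual step is recognizing that the $(\alpha B)(\alpha V)$ denominators produced by the $H(U(\cdot))$ factors precisely cancel the normalizing denominators in $e^{U(B)+U(V)}$, so that what survives is exactly the bilinear form $B^\top (A^\top \p A) V$, at which point the $K$-condition delivers the diagonal matrix $D$.
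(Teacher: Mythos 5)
Your proposal is correct and follows essentially the same route as the paper: establish $\psi(BV)=\log(B_\mu D_\mu V_\mu)$ by expanding $H(U(B)+U(V))$ via $p_\mu e^{U_\mu(B)}e^{U_\mu(V)}$, letting the $K$-condition collapse the bilinear form $B^\top(A^\top\p A)V$ to $D$, and cancelling the $(\alpha_\nu B_\nu)(\alpha_\nu V_\nu)$ denominators against $e^{-H(U(B))-H(U(V))}$. The only cosmetic difference is that you re-derive $e^{H(U(V))}=1/(\alpha_\nu V_\nu)$ from Proposition \ref{prop:x} rather than quoting equation \eqref{eq:H} directly.
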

\begin{proof}We will show that the function $\psi$ above is given by
$$\psi(BV)=\log B_\mu D_\mu V_\mu \ .$$
By equation \eqref{eq:H}, we have
\begin{equation}\label{eq:BV}
H(U(V))=\log(1/\alpha_\mu V_\mu) \qquad\hbox{and}\qquad H(U(B))=\log(1/\alpha_\mu B_\mu) \ .
\end{equation}
Now,  using equations \eqref{eq:H} and \eqref{eq:x}, we have 
\begin{align*}
H(U(V)+U(B))&=\log(p_\mu e^{U_\mu(V)}e^{U_\mu(B)})\\
&=\log\left(p_\mu  \frac{A_{\mu\lambda} V_\lambda}{\alpha_\sigma V_\sigma} \frac{A_{\mu\nu} B_\nu}{\alpha_\epsilon B_\epsilon}\right)
\end{align*}
Rewriting this last in the form
$$\log\left( \frac{ V_\lambda(A^\top \p A)_{\lambda \nu}B_\nu}{\alpha_\sigma V_\sigma \,\alpha_\epsilon B_\epsilon}\right)$$
invoke the $K$-condition, $A^\top\p A=D$ and bring in equation \eqref{eq:BV} yielding
$$\log(V_\lambda D_\lambda B_\lambda)+H(U(V))+H(U(B))$$
from which the form of $\psi$ follows. Exponentiating and raising to the power $N$ then gives the result.
\end{proof}

\subsection{Lowering operators for the Krawtchouk system. Lie algebra.}
We are now in a position to determine the lowering operators $\L_i$.
\begin{proposition} The Leibniz function $\Upsilon$ satisfies the partial differential equations
$$\frac{1}{D_i}\,\frac{\d \Upsilon}{\d B_i}=NV_i\Upsilon -V_i\sum V_j\frac{\d \Upsilon}{\d V_j} \ .$$ 
\end{proposition}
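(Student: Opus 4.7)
The plan is to prove this by direct computation from the closed form $\Upsilon = (B_\mu D_\mu V_\mu)^N$ supplied by Proposition \ref{prop:leib}. The partial differential equation is really an identity about a homogeneous function, so no deeper machinery should be needed beyond the chain rule and an Euler-type relation.

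First I would introduce the shorthand $S = B_\mu D_\mu V_\mu$, so that $\Upsilon = S^N$. Using the conventions $B_0 = V_0 = 1$ together with the normalization $D_{00}=1$ from the $K$-condition, the summation convention gives
$$S = 1 + \sum_{j=1}^d B_j D_j V_j.$$
For the left-hand side, straightforward differentiation yields $\partial \Upsilon/\partial B_i = N S^{N-1} D_i V_i$ for $1 \le i \le d$, so that $D_i^{-1} \partial \Upsilon/\partial B_i = N S^{N-1} V_i$.

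For the right-hand side, I would first compute the Euler-type sum. Since $\partial S/\partial V_j = B_j D_j$ for $1 \le j \le d$, one has
$$\sum_{j=1}^{d} V_j \frac{\partial \Upsilon}{\partial V_j} = N S^{N-1} \sum_{j=1}^{d} V_j B_j D_j = N S^{N-1}(S - 1),$$
where the last step uses the explicit expression for $S$ above. Substituting into the right-hand side gives
$$N V_i \Upsilon - V_i \cdot N S^{N-1}(S-1) = N V_i S^{N-1}\bigl[S - (S-1)\bigr] = N V_i S^{N-1},$$
which matches the left-hand side.

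The only potential subtlety is bookkeeping around the projective conventions $B_0 = V_0 = 1$ (so that $B_0$ and $V_0$ are not differentiation variables) together with $D_0 = 1$, which ensures that the index $\mu=0$ term in $S$ contributes a constant $1$ rather than an extra active variable. Once that convention is made explicit, the identity reduces to the two one-line computations above, so I do not anticipate any real obstacle in the argument.
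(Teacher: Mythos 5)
Your proof is correct and follows essentially the same route as the paper: both differentiate the closed form $\Upsilon=(B_\mu D_\mu V_\mu)^N$ from Proposition \ref{prop:leib} and use the Euler-type identity $\sum_j V_j B_j D_j = B_\mu D_\mu V_\mu - 1$ obtained by separating the $\mu=0$ term with $B_0V_0D_0=1$. Your shorthand $S=B_\mu D_\mu V_\mu$ is only a cosmetic repackaging of the paper's computation.
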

\begin{proof}
First, for the left hand side
$$\frac{1}{D_i}\,\frac{\d \Upsilon}{\d B_i}=\frac{N V_i}{B_\mu V_\mu D_\mu }\Upsilon \ .$$
Now calculate
$$\sum_j V_j\frac{\d \Upsilon}{\d V_j}=N\sum_j \frac{B_jD_jV_j}{B_\mu V_\mu D_\mu}\Upsilon
=N\,\frac{B_\nu V_\nu D_\nu-1}{B_\mu V_\mu D_\mu}\Upsilon=N(1-\frac{1}{B_\mu V_\mu D_\mu})\Upsilon$$
taking out the term $B_0V_0D_0=1$. Hence
$$N\Upsilon-\sum_j V_j\frac{\d \Upsilon}{\d V_j}=\frac{N}{B_\mu V_\mu D_\mu}\Upsilon$$
and multiplying through by $V_i$ yields the result.
\end{proof}
Re-interpreting the derivatives $\d\Upsilon/\d V_i$ as raising operators $\R_i$ yields
\begin{corollary}\label{cor:L}
The lowering operators have the form
$$\L_i=D_i\bigl(N-\sum_{j=1}^d \R_j\V_j\bigr)\,\V_i \ .$$
\end{corollary}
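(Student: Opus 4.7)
The plan is to read the partial differential equation of the preceding Proposition as an operator identity on Fock space, using the Berezin/coherent-state correspondence. Three dictionary entries underpin the translation: (i) because $\R_i e^{V\R}\Omega = (\d/\d V_i)\,e^{V\R}\Omega$, the symbol $\d\Upsilon/\d V_i$ represents $\R_i$ acting on the right-hand coherent state; (ii) because $\R_i$ and $\L_i$ are adjoint, the symbol $\d\Upsilon/\d B_i$ represents $\L_i$; and (iii) a short computation on the series expansion $e^{V\R}\Omega=\sum_n V^n\R^n\Omega/n!$ yields $\V_i\,e^{V\R}\Omega=V_i\,e^{V\R}\Omega$, so multiplication by $V_i$ in the symbol represents $\V_i$. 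Concretely, a normal-ordered operator $T=\R^n\V^m$ has symbol $V^m(\d/\d V)^n\Upsilon$, and conversely a symbol of the form $F(V)\,G(\d_V)$ with all $V$-factors to the left of all $\d_V$-factors reads off as the operator $G(\R)\,F(\V)$, with $\R$'s to the left of $\V$'s.

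Applying this dictionary to the PDE, the left-hand side $(1/D_i)\,\d\Upsilon/\d B_i$ becomes $\L_i/D_i$. The term $NV_i\Upsilon$ on the right is already in normal form and becomes $N\V_i$. For each $j$, the summand $V_iV_j\,\d\Upsilon/\d V_j$ is also normal-ordered and translates to $\R_j\V_i\V_j$; since the velocity operators mutually commute, this equals $\R_j\V_j\V_i$, and the sum over $j$ factors as $\bigl(\sum_{j=1}^d \R_j\V_j\bigr)\V_i$. Collecting,
$$
\L_i/D_i \;=\; \bigl(N-\sum_{j=1}^d \R_j\V_j\bigr)\V_i,
$$
and multiplying through by $D_i$ gives the claimed formula.

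The principal obstacle is precisely this normal-ordering subtlety: one cannot blindly substitute $V_i\mapsto\V_i$ and $\d/\d V_j\mapsto\R_j$ in arbitrary positions, since $[\V_j,\R_j]=1$ forces different orderings to differ by commutator terms. The translation is unambiguous only when all $V$-multiplications stand to the left of all $V$-derivatives in the symbol, which is fortunately already the form in which the PDE was written. The commutativity of the velocity operators among themselves is then what allows the common factor $\V_i$ to be pulled out on the right of $\sum_j \R_j\V_j$, producing the compact closed form stated in the Corollary.
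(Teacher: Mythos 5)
Your proposal is correct and follows the same route as the paper: the paper derives the Corollary from the preceding partial differential equation for $\Upsilon$ by ``re-interpreting the derivatives $\d\Upsilon/\d V_i$ as raising operators,'' which is exactly the Berezin dictionary you use. Your version merely makes explicit what the paper leaves implicit --- that $\V_i e^{V\R}\Omega = V_i e^{V\R}\Omega$, that $\d\Upsilon/\d B_i$ is the symbol of $\L_i$, and that the translation is unambiguous only because the symbol is already normal-ordered with the $V$-multiplications to the left of the $V$-derivatives --- and this added care is a genuine improvement in rigor rather than a different proof.
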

\begin{subsubsection}{Lie algebra}
Introduce the \textsl{number operator} $\N=\sum_k\R_k\V_k$, 
satisfying
$$\N \R^n\Omega=|n| \R^n\Omega \ .$$
We can write the above result in a convenient form.
\begin{proposition}\label{prop:num}
 In terms of the number operator $\N$, we have
$$\L_i=D_i(N-\N)\V_i \ .$$
\end{proposition}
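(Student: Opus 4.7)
The claim is essentially a cosmetic rewriting of Corollary \ref{cor:L}, so the plan is to confirm that the operator $\sum_{j=1}^d \R_j\V_j$ appearing in that corollary really is the number operator $\N$ as advertised, and then substitute.

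First I would check the defining property $\N\R^n\Omega = |n|\R^n\Omega$. Using the Appell/Fock action laid out earlier, $\V_j$ acts by $\V_j \R^n\Omega = n_j \R^{n-e_j}\Omega$ whenever $n_j \ge 1$ (and gives zero otherwise), which is exactly the statement that $\V_j$ is the canonical velocity (lowering with respect to differentiation, not with respect to the inner product). Applying $\R_j$ restores the index, yielding $\R_j\V_j \R^n\Omega = n_j \R^n\Omega$. Summing $j$ from $1$ to $d$ gives $\sum_{j=1}^d n_j = |n|$, since the Latin indices range over $1,\ldots,d$ per the summation convention, and the monomials $\R^n\Omega$ are indexed by $n = (n_1,\ldots,n_d)$ in the Bernoulli setup (the $n_0$ slot is fixed by homogeneity at level $N$). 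This confirms $\N = \sum_{k=1}^d \R_k\V_k$ has the stated eigenvalue $|n|$ on the basis states.

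With this in hand the proposition is immediate: the expression $\sum_{j=1}^d \R_j\V_j$ appearing in Corollary \ref{cor:L} is by definition $\N$, so $\L_i = D_i(N - \sum_{j=1}^d \R_j\V_j)\V_i = D_i(N - \N)\V_i$.

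There is no real obstacle here; the content of the proposition is notational. If anything, the only point worth being careful about is that the summation in $\N$ runs over $j=1,\ldots,d$ and not $0,\ldots,d$, which is consistent with the stated convention (Latin indices, summed only when explicit) and with the fact that $\V_0 = 1$ as a projective coordinate has no lowering action to contribute.
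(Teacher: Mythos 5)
Your proposal is correct and matches the paper, which treats Proposition \ref{prop:num} as an immediate restatement of Corollary \ref{cor:L} once $\N=\sum_k\R_k\V_k$ is defined; your extra verification of the eigenvalue property $\N\R^n\Omega=|n|\R^n\Omega$ is a sound (if optional) sanity check consistent with the stated actions of $\R_j$ and $\V_j$.
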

Note the commutation relations
$$[\N,\R_i]=\R_i \qquad\hbox{and}\qquad [\V_j,\N]=\V_j \ .$$
Next, form the $d^2$ operators 
$$ \rho_{ij}=[\L_i,\R_j] \ . $$
\begin{proposition} \label{prop:rho}
We have\\

1. $\rho_{ii}=D_i(N-\R_i\V_i-\N)$.\\

2. For $i\ne j$, $\rho_{ij}=-D_i\R_j\V_i$.
\end{proposition}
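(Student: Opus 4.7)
The plan is a direct computation from the formula $\L_i = D_i(N-\N)\V_i$ of Proposition \ref{prop:num}, using only the two commutation relations already on the table: the boson relation $[\V_i,\R_j]=\delta_{ij}$ and the relation $[\N,\R_j]=\R_j$ recorded just before Proposition \ref{prop:rho}. Since the scalars $D_i$ and $N$ (the parameter, not the operator) commute with everything, the task reduces to computing $[(N-\N)\V_i,\R_j]$.

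The natural way to proceed is the Leibniz rule for commutators: write
$$[(N-\N)\V_i,\R_j] = (N-\N)[\V_i,\R_j] + [(N-\N),\R_j]\V_i.$$
The first term collapses to $(N-\N)\delta_{ij}$ by the boson relation, while the second term is $-[\N,\R_j]\V_i = -\R_j\V_i$ since $N$ is scalar and $[\N,\R_j]=\R_j$. Thus
$$\rho_{ij} = D_i\bigl((N-\N)\delta_{ij} - \R_j\V_i\bigr).$$

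Setting $i=j$ gives $\rho_{ii} = D_i(N-\N-\R_i\V_i)$, which is precisely statement 1 after reordering the terms. Setting $i\ne j$ kills the $\delta_{ij}$ term and leaves $\rho_{ij} = -D_i\R_j\V_i$, which is statement 2. No obstacle is really expected here; the only thing to watch is the distinction between the scalar $N$ appearing in $\L_i$ and the number operator $\N$, so that one does not mistakenly commute $N$ as an operator with $\R_j$ and pick up a spurious extra term.
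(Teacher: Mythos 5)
Your computation is correct and is essentially the paper's own proof: both reduce $[\L_i,\R_j]$ via the Leibniz rule to the two known commutators $[\V_i,\R_j]=\delta_{ij}$ and $[\N,\R_j]=\R_j$ (the latter entering with a minus sign since the scalar $N$ commutes with $\R_j$), yielding $\rho_{ij}=D_i\bigl((N-\N)\delta_{ij}-\R_j\V_i\bigr)$. Your explicit warning about distinguishing the scalar $N$ from the operator $\N$ is exactly the one subtlety in the argument, and you handle it correctly.
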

\begin{proof}If $i\ne j$, then $\R_j$ and $\V_i$ commute so that
$$[\L_i,\R_j]=-D_i[\N,\R_j]\V_i=-D_i\R_j\V_i$$
as stated. For $i=j$, we get
$$[\L_i,\R_i]=D_i(N-[\N,\R_i]\V_i-\N)=D_i(N-\R_i\V_i-\N)$$
as required.
\end{proof}
 Denoting adjoint by ${}^*$ we note that
$$\rho_{ij}*=\rho_{ji}$$
and that $\rho_{ii}$, $\N$, and $\R_i\V_i$ are all selfadjoint.\\

For a dimension count, we have $d^2$ operators $\rho_{ij}$
plus the $2d$ raising and lowering operators which yields a Lie algebra of dimension $d^2+2d=(d+1)^2-1$. Thus, we have
a copy of $\mathfrak{sl}(d+1)$.
\end{subsubsection}

\section{Observables}
Going back to the observables, we can express the operators $X_j$ in manifestly selfadjoint form.
\begin{proposition} We have $X_j=$
$$ \sum_{1\le i\le d} (\R_i+\L_i)\,C_{ij}+(N-\N)
-\frac{1}{p_j}\,\sum_{\substack{1\le i\le d\\ 1\le k\le d}}C_{ij}C_{kj}\rho_{ik} $$
for $1\le j\le d$.
\end{proposition}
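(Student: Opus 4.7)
The plan is to take the expression $X_j = \bigl(Np_j + \sum_i \R_i C_{ij} - p_j \N\bigr)\,A_{j\mu}\V_\mu$ established in Proposition~\ref{prop:obs} as the target, and reduce the right-hand side of the claimed identity to exactly this form by simplifying each of its four summands in turn. The central scalar identity I will use throughout is $D_i C_{ij} = A_{ji}p_j$, which is just the entrywise reading of $C = D^{-1}A^\top\p$ from equation~\eqref{eq:inv}.

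First, since $C_{ij}$ is a scalar and $\L_i = D_i(N - \N)\V_i$ by Proposition~\ref{prop:num}, the $\L$-term reduces as
$$\sum_{i=1}^d \L_i C_{ij} = (N - \N)\sum_{i=1}^d D_i C_{ij}\,\V_i = p_j(N - \N)\sum_{i=1}^d A_{ji}\V_i = p_j(N - \N)(A_{j\mu}\V_\mu - 1),$$
where the final step uses the projective conventions $A_{j0} = 1$ and $\V_0 = 1$. Next, I expand the $\rho$-sum using Proposition~\ref{prop:rho}; combining its diagonal and off-diagonal pieces into a single doubly-indexed sum yields
$$\sum_{i,k} C_{ij}C_{kj}\rho_{ik} = (N - \N)\sum_i C_{ij}^2 D_i - \sum_{i,k} C_{ij}C_{kj}D_i\,\R_k\V_i \ .$$
The scalar coefficient $\sum_{i=1}^d C_{ij}^2 D_i$ is evaluated by substituting $C_{ij} = A_{ji}p_j/D_i$ and invoking the $(j,j)$-entry of $AC = I$, namely $p_j\sum_{\mu=0}^d A_{j\mu}^2/D_\mu = 1$; subtracting off the $\mu=0$ contribution $A_{j0}^2/D_0 = 1$ gives $\sum_{i=1}^d C_{ij}^2 D_i = p_j(1 - p_j)$. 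For the remaining operator sum, factoring $D_i C_{ij} = A_{ji}p_j$ out cleanly splits it as $p_j\bigl(\sum_k C_{kj}\R_k\bigr)\bigl(A_{j\mu}\V_\mu - 1\bigr)$, the splitting being legitimate because all $\R_k$'s stand to the left of all $\V_i$'s.

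Assembling the four pieces of the claimed expression is now direct. The three $(N-\N)$-bearing contributions $p_j(N-\N)(A_{j\mu}\V_\mu - 1) + (N - \N) - (1 - p_j)(N - \N)$ collapse to $p_j(N-\N)A_{j\mu}\V_\mu$, while the $\R$-contributions $\sum_i \R_i C_{ij} + \bigl(\sum_k C_{kj}\R_k\bigr)(A_{j\mu}\V_\mu - 1)$ simplify to $\bigl(\sum_i \R_i C_{ij}\bigr)A_{j\mu}\V_\mu$ via cancellation of the constant term $\sum_k C_{kj}\R_k$ against $\sum_i \R_i C_{ij}$ (which agree after relabeling). The total is precisely the form of $X_j$ from Proposition~\ref{prop:obs}. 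The main obstacle is the careful bookkeeping of operator ordering and the algebraic identity $\sum_i C_{ij}^2 D_i = p_j(1-p_j)$, which is where the $K$-condition $A^\top\p A = D$ (in its reciprocal form $\sum_\mu A_{j\mu}^2/D_\mu = 1/p_j$) enters essentially.
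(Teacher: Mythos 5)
Your proof is correct and uses essentially the same ingredients as the paper's own argument --- Lemma \ref{lem:id}, Proposition \ref{prop:num}, Proposition \ref{prop:rho}, and the identity $\sum_i C_{ij}A_{ji}=1-p_j$ (your $\sum_{i} C_{ij}^2D_i=p_j(1-p_j)$ is the same statement after one application of $D_iC_{ij}=p_jA_{ji}$) --- merely run in the opposite direction: you reduce the claimed selfadjoint expression back to the form of Proposition \ref{prop:obs}, whereas the paper expands that form forward into the claimed expression. The two computations coincide step for step up to reading order, so no further comment is needed.
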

First we need some basic identities
\begin{lemma}\label{lem:id}
With $C=A^{-1}$, we have
$$p_jA_{ji}=D_iC_{ij}$$
\end{lemma}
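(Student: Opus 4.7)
The plan is to derive the identity by direct entrywise computation from the explicit formula for $C$ established in equation \eqref{eq:inv}. Recall that $C = A^{-1} = D^{-1} A^\top \p$, where both $D$ and $\p$ are diagonal, with $D_{kk} = D_k$ and $\p_{\ell\ell} = p_\ell$.

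First I would compute the $(i,j)$ entry of $C$ by writing out the triple product using the summation convention. Because $D^{-1}$ is diagonal, the first multiplication simply divides the $i$-th row of $A^\top \p$ by $D_i$; because $\p$ is diagonal, the product $A^\top \p$ has $(k,\ell)$ entry equal to $A_{\ell k} p_\ell$. Extracting the $(i,j)$ entry then collapses the sums to a single term:
\begin{equation*}
C_{ij} \;=\; \frac{1}{D_i}\,(A^\top \p)_{ij} \;=\; \frac{1}{D_i}\,A_{ji}\,p_j.
\end{equation*}

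Multiplying through by $D_i$ yields $D_i C_{ij} = p_j A_{ji}$, which is the claimed identity. There is no real obstacle here: the lemma is essentially a restatement of the explicit form of $C$ given in \eqref{eq:inv}, and the only subtlety is keeping track of which index of $A^\top$ corresponds to which index of $A$ (hence the transposition $A_{ji}$ appearing on the right-hand side).
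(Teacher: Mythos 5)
Your computation is correct and is exactly the argument the paper intends: the lemma is just the $(i,j)$ entry of the matrix identity $DC=A^\top\p$ from equation \eqref{eq:inv}, and your index bookkeeping ($C_{ij}=D_i^{-1}A_{ji}p_j$) checks out. Nothing further is needed.
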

which is an explicit form of the matrix relation $DC=A^\top\p$, cf. equation \eqref{eq:inv}.

\begin{proof}
Recall, Proposition \ref{prop:obs},
$$X_j=\left(N\,p_j+\sum_i \R_i(C_{ij}-p_j\V_i)\right)\,A_{j\mu} \V_\mu$$ 
and note that
$$A_{j\mu} \V_\mu=A_{j0}\V_0+\sum_{k=1}^d A_{jk}\V_k=1+\sum_k A_{jk}\V_k \ .$$
We get, using the above Lemma, and Proposition \ref{prop:num},
\begin{align}\label{eq:xx}
(p_j(N-\N)&+\sum_i \R_iC_{ij})(\sum_k A_{jk}\V_k+1) \nonumber \\
&=p_j(N-\N)+(N-\N)\sum_iD_iC_{ij}\V_i+(\sum_i \R_iC_{ij})(\sum_k A_{jk}\V_k+1)\nonumber \\
&=\sum_i(\R_i+\L_i)C_{ij}+p_j(t-\N)+\sum_{i,k}C_{ij}A_{jk}\R_i\V_k \ .
\end{align}
In the last sum, for $i\ne k$, we have $\R_i\V_k=-(1/D_k)\rho_{ki}$. We get
\begin{equation}\label{eq:ik}
-\sum_{i\ne k}\frac{1}{D_k}\,C_{ij}A_{jk}\rho_{ki}=-\frac{1}{p_j}\sum_{i\ne k}\,C_{ij}C_{kj}\rho_{ki}
\end{equation}
as in Lemma \ref{lem:id}. For $i=k$, we have from Proposition \ref{prop:rho},
$$\R_i\V_i=N-\N-\frac{1}{D_i}\,\rho_{ii}$$
and
\begin{align}\label{eq:ii}
\sum_i C_{ij}A_{ji}\R_i\V_i&=\sum_i C_{ij}A_{ji}(N-\N-\frac{1}{D_i}\,\rho_{ii}) \nonumber\\
&=\sum_i C_{ij}A_{ji}(N-\N)-\frac{1}{p_j}\,\sum_i C_{ij}C_{ij}\rho_{ii} \ .
\end{align}
Finally, observe that
$$\sum_i C_{ij}A_{ji}=C_{\mu j}A_{j\mu}-C_{0j}A_{j0}=1-p_j$$
recalling Proposition \ref{prop:x}. Combining equations \eqref{eq:xx}, \eqref{eq:ik} and \eqref{eq:ii} we arrive at the
desired form.
\end{proof}
\subsection{Recurrence formulas}
Now returning to the form of the $X_j$ in terms of the canonical raising and velocity operators
$$X_j=\bigl(N\,p_j+\sum_i \R_i(C_{ij}-p_j\V_i)\bigr)\,A_{j\mu} \V_\mu$$ 
we see that these yield recurrence formulas for the multivariate Krawtchouk polynomial system.
\begin{proposition}The Krawtchouk polynomials satisfy the following recurrence relations\\
\begin{align*}
x_j\,&K_n(x,N)=\\
&p_j(N-|n|)\,K_n+\sum_i C_{ij}K_{n+e_i}+p_jN\sum_k A_{jk}n_k\,K_{n-e_k}\\
&\qquad -p_j\sum_kA_{jk}(|n|-1)n_kK_{n-e_k}+\sum_{i,k}C_{ij}A_{jk}n_k\,K_{n-e_k+e_i} \ .
\end{align*}
\begin{proof} Starting with $A_{j\mu} \V_\mu=1+\sum_k A_{jk}\V_k\vstrut$ as in the above proof,
expand out the formula for $X_j$. Applying to $K_n$ yields the result.
\end{proof}
\end{proposition}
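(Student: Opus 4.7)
The plan is to apply the operator identity for $X_j$ from Proposition \ref{prop:obs} directly to the basis state $K_n$, using only the Fock actions $\R_i K_n = K_{n+e_i}$ and $\V_j K_n = n_j K_{n-e_j}$, and then match the resulting terms with the five summands appearing in the claimed recurrence.

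First I would rewrite
$$X_j = \left(Np_j + \sum_i \R_i C_{ij} - p_j \sum_i \R_i\V_i\right)\left(1 + \sum_k A_{jk}\V_k\right)$$
using $A_{j\mu}\V_\mu = A_{j0}\V_0 + \sum_k A_{jk}\V_k = 1 + \sum_k A_{jk}\V_k$ (since $A_{j0}=1$ and $\V_0=1$ by the projective convention). Expanding the product into six pieces and applying each to $K_n$, four of them are immediate: $Np_j\cdot 1$ contributes $Np_j K_n$; $Np_j\sum_k A_{jk}\V_k$ contributes $Np_j\sum_k A_{jk}n_k K_{n-e_k}$; $\sum_i \R_i C_{ij}\cdot 1$ contributes $\sum_i C_{ij}K_{n+e_i}$; and $\sum_{i,k}C_{ij}A_{jk}\R_i\V_k$ contributes $\sum_{i,k}C_{ij}A_{jk}n_k K_{n-e_k+e_i}$.

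The two remaining pieces both involve the operator $\R_i\V_i$. The piece $-p_j\sum_i \R_i\V_i\cdot 1$ acts as $\R_i\V_i K_n = n_i K_n$, giving $-p_j |n| K_n$, which combines with $Np_j K_n$ to yield $p_j(N-|n|)K_n$, matching the first term of the claim. The delicate piece is $-p_j\sum_{i,k}A_{jk}\R_i\V_i\V_k$, where the non-commutation of $\R_i$ with $\V_i$ matters. Here I would split the sum over $i$ into the cases $i=k$ and $i\ne k$: for $i=k$ one has $\R_k\V_k^2 K_n = n_k(n_k-1)K_{n-e_k}$, and for $i\ne k$ the operators $\V_i$ and $\V_k$ commute with each other while $\R_i\V_i K_{n-e_k} = (n_i)K_{n-e_k}$, giving $n_k n_i K_{n-e_k}$. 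Summing over $i$ yields
$$n_k(n_k-1) + \sum_{i\ne k} n_k n_i = n_k\bigl(n_k - 1 + (|n|-n_k)\bigr) = n_k(|n|-1),$$
so this piece produces $-p_j\sum_k A_{jk}(|n|-1)n_k K_{n-e_k}$, matching the fourth summand.

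The main obstacle is just this bookkeeping in the $\R_i\V_i\V_k$ term, since one must resist the temptation to commute $\R_i$ past $\V_i$ without accounting for the $i=k$ contribution; once the split is handled, assembling the five resulting terms reproduces the stated recurrence verbatim.
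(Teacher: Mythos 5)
Your proposal is correct and follows exactly the route the paper intends: expand $X_j=\bigl(Np_j+\sum_i\R_iC_{ij}-p_j\sum_i\R_i\V_i\bigr)\bigl(1+\sum_kA_{jk}\V_k\bigr)$ and apply it termwise to $K_n$. Your careful handling of the $\R_i\V_i\V_k$ piece (splitting $i=k$ from $i\ne k$ to get the factor $n_k(|n|-1)$, equivalently $\sum_i(n-e_k)_i=|n|-1$) supplies the one detail the paper's terse proof leaves implicit, and all five summands match.
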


\begin{section}{Conclusion}
The Krawtchouk polynomials and their multivariable generalizations provide models involving
a wide spectrum of mathematical objects. Among the most interesting aspects from the present point of view
are the representations of Lie algebras on spaces of polynomials and the quantization
aspects, especially the connections with quantum probability.\\

The first part of this work, KG-Systems I, emphasizes the linear algebra and numerical aspects of these systems,
while KG-Systems II, shows these systems in the analytic setting of a discrete quantum system, albeit over the reals.\\

One expects these classes of polynomials to be useful in coding theory while applications
in image compression as well as quantum computation would certainly not be wholly unexpected. And from the
theoretical point of view how these systems behave under various limit theorems analogous to the classical Poisson and Central
limit theorems provides an area for interesting further study.\\
\end{section}

\paragraph{\bf Acknowledgment. }
The author is grateful to Prof. Obata and the Center at Tohoku University 
for the opportunity to have participated  in the 
first GSIS-RCPAM International Symposium, Sendai, 2013, where this material
was presented. Fruitful discussions with Prof. Tanaka are appreciatively acknowledged.

\end{document}